\DeclareMathAlphabet{\mathpzc}{OT1}{pzc}{m}{it}
\newtheorem{theorem}{Theorem}
\newtheorem{corollary}[theorem]{Corollary}
\newtheorem{lemma}[theorem]{Lemma}
\newtheorem{proposition}[theorem]{Proposition}
\theoremstyle{definition}
\newtheorem{definition}[theorem]{Definition}
\theoremstyle{remark}
\newtheorem*{corollary-non}{Corollary}
\newcommand{\dottedcolumn}[3]{
	\settowidth{\dimen0}{$#1$}
	\settowidth{\dimen2}{$#2$}
	\ifdim\dimen2>\dimen0 \dimen0=\dimen2 \fi
	\begin{pmatrix}\,
		\vcenter{
			\kern.6ex
			\vbox to \dimexpr#1\normalbaselineskip-1.2ex{
				\hbox{$#2$}
				\kern3pt
				\xleaders\vbox{\hbox to \dimen0{\hss.\hss}\vskip4pt}\vfill
				\kern1pt
				\hbox{$#3$}
			}\kern.6ex}\,
	\end{pmatrix}
}
\DeclarePairedDelimiter{\norm}{\lVert}{\rVert}
\title{Deconvolution on graphs via linear programming}
\author{Bernhard G. Bodmann}\thanks{This research was supported in part by NSF Grant DMS ATD-1925352}
\address{641 Philip G. Hoffman Hall, Department of Mathematics, University of Houston, Houston, TX 77204-3008} \email{bgb@math.uh.edu} 
\author{Jennifer J. May}
\address{641 Philip G. Hoffman Hall, Department of Mathematics, University of Houston, Houston, TX 77204-3008}
\email{jrmay@uh.edu}
\begin{document}

\begin{abstract}
	The main challenge addressed in this paper is to identify individual terms in a superposition
of heat kernels on a graph. We establish geometric conditions on the vertices at which these 
heat kernels are centered and find bounds on the time parameter governing the evolution under the heat semigroup
that guarantee a successful recovery. 
This result can be viewed as a type of deconvolution on a graph. A 
first main result addresses the setting of a common time parameter for all the heat kernels. We also
treat a more general setting when the time parameter depends on the location at which the heat kernel is centered.  
\end{abstract}

\maketitle


\section{Introduction}

Sparse recovery is generally understood as the identification of individual terms in a linear combination 
of vectors from a dictionary. A main challenge is to establish an accurate estimate of these terms
even if one observes only a noisy version of a low-dimensional projection of the given linear combination. 
Under the theme of ``compressed sensing off the grid'' or ``superresolution'', this type of task has been
accomplished for identifying superpositions of sinusoids \cite{Tang} or finitely supported measures from lowest Fourier coefficients \cite{Candgrand}. In yet another work, these ideas were developed in the setting of Hilbert spaces
with reproducing kernels \cite{BodmannKutyniokFlinth}
or in spline-type spaces \cite{Unser:2021}. In the present paper, 
we choose an application to graph signal processing \cite{MR3038378,MR3225164}, where
ideas from harmonic analysis are realized in a less structured setting given by discrete geometries.
In a few papers, the idea of sampling a function and finding an accurate approximation has been developed,
mostly for a class of smooth functions, see \cite{MR2737764,MR3385544,MR3747603,MR3841742}.
Here, we focus on the recovery of a sparse function, whose support is a small fraction of the graph, when only a smoothed
version of the function is  observed. The smoothing replaces any function supported in one point by a heat kernel 
centered at this point, with a given time parameter. The time parameter can be fixed, independent of 
the point of support, or vary across the graph.   
A main insight in this paper is that dual certificates or approximate dual certificates are helpful
to establish a domain for the time parameter and a range of support patterns that permit sparse recovery. 
 
For the first main task addressed in this paper, we assume that a function $g$
has support $\{v_j\}_{j=1}^J$ and the 
values $g(v_j)=c_j$, $j \in \{1, 2, \dots, J\}$.
We wish to recover an approximation to $g$ from
observing
the smoothed version $e^{t \Delta} g$ corrupted by noise,
$$
	f=\sum_{j=1}^{J} c_j e^{t\Delta}\delta_{v_j} + \omega
$$
where $t \ge 0$ is the time parameter for the heat semigroup $\{e^{t\Delta}\}_{t \ge 0}$, for each $x \in V$, $\delta_x$ is the function whose only non-zero value 
is $\delta_x(x)=1$, and
$\omega$ is the additive noise. Applying a linear inversion of $e^{t \Delta}$ is problematic. 
Even with a regularization step using an orthogonal projection $P_\Lambda$, which projects on the sum 
of all eigenspaces corresponding to eigenvalues of the negative Laplacian $-\Delta$ that are in the interval $[0,\Lambda]$,
the approximately recovered vector is
$$
    \hat g = e^{-t \Delta} P_\Lambda f =  \sum_{j=1}^{J} c_j P_\Lambda \delta_{v_j} + P_\Lambda e^{-t \Delta} \omega
$$
and the worst-case error norm is bounded below by $ |e^{t \Lambda}\|\omega\|_2 - \| (I-P_\Lambda) g\|_2 |$, which exhibits exponential growth in $t$. A linear recovery procedure based on regularizing with $P_\Lambda$ and
inverting the heat semigroup then needs to trade off errors between the effect of $P_\Lambda$ on $g$ and the effect
of the noise term $P_\Lambda \omega$ when the heat semigroup is inverted.  

In this paper, we establish a simple procedure that only requires an a priori knowledge of the 
noise norm $\|\omega\|_2$ in order to find an accurate approximation of the original sparse signal.
To this end, we formulate a linear program that minimizes the $\ell^1$-norm among all signals
that are consistent with the observed data. This strategy has an extensive history in applications
of compressed sensing \cite{CandesEmTao,errorcantao,Foucart:2013vn}.  In the last part of the paper, we
adapt the recovery strategy to observing
$$
   f=\sum_{j=1}^{J} c_j e^{t_j\Delta}\delta_{v_j} + \omega \, ,
$$
where the global time parameter $t$ has been replaced by $\{t_j\}_{j=1}^J$.


\begin{definition} \label{LP}
Let $(V,E)$ be a finite, simple graph with Laplacian $\Delta$, $f\in \ell^2(V)$ 
and $\epsilon, t \ge 0$.
The vector $\hat g \in \ell^2(V)$ is a solution of the linear program (LP) associated with the heat semigroup if $\|\hat g \|_1 \le \|\tilde g\|_1$
for each $\tilde g$ with $\|e^{t \Delta} \tilde g - f \|_2 \le  \epsilon$.
\end{definition}

This definition is chosen so that when $f=e^{t\Delta} g +\omega$ with $g,\omega \in \ell^2(V)$ unknown and $\|\omega\|_2 \le \epsilon$, then $g$ is in the feasible domain over which the $\ell^1$-norm is minimized.

We use dual certificates, elements in $\ell^\infty(V)$, to verify that a vector is a norm minimizer. To illustrate this, we first consider the
noiseless case.

\begin{definition}
If $f, \tilde g \in \ell^2(V)$ satisfy $e^{t \Delta}  \tilde g = f$ and
there exists $a \in \ell^2(V)$ such that $h = e^{t\Delta} a$ gives $\|h\|_\infty \le 1$ and
$\|\tilde g\|_1 = \langle f, a\rangle$, then we say that $h$ is a dual certificate for $\tilde g$.
\end{definition}

In this case, $\tilde g$ is a solution of (LP) (see Definition \ref{LP}) because
 $\|\tilde g\|_1= \langle f, a \rangle = \langle g, h\rangle = |\langle g, h \rangle |=\|g\|_1$ rules out that there is
 another feasible choice for $\tilde g$ with a lower norm.
Further below, we recall that the same argument can also be used to show uniqueness and 
stability with respect to noise, when the dual certificate satisfies additional conditions.

In our context, 
for $g = \sum_{j=1}^J c_j \delta_{v_j}$, we construct a dual certificate
in the form
$$
   h(x) = \sum_{j=1}^J a_j e^{t \Delta} \delta_{v_j}(x) \, .
$$

In the absence of noise, the conditions then imply $\|h\|_\infty = 1$ and $h(v_j) = c_j /|c_j|$ for each
$j \in \{1, 2, \dots, J\}$.
We argue qualitatively why such a dual certificate exists for small time. If $t=0$, then
setting $h(x) = c_j /|c_j|$ if $x=v_j$ and otherwise $h(x)=0$ gives the certificate.
We denote $S=\{v_j\}_{j=1}^J$.
The heat semigroup $\{e^{t\Delta}\}_{t \ge 0}$ is invertible on $\ell^2(V)$, hence 
also on the subspace $\ell^2(S)$. We denote this operator by 
$M^t$, and its inverse by $M^{-t}$.
To be precise, if $f \in \ell^2(S)$, then we extend $f$ by $f(x) = 0$ if $x \not\in S$
and let for $y \in S$
$$
    (M^t f)(y) =  (e^{t \Delta} f) (y) \, .
$$ 
By continuity, 
$\| M^{-t} h\|_\infty$ is close to $\|h\|_\infty = 1$ for all sufficiently small $t$. 
Now extending $a\equiv M^{-t} h$
by zero on $V\setminus S$ and using that $e^{t \Delta} a$ is continuous in the time parameter, for sufficiently small $t$,
$|(e^{t\Delta} a)(x)|<1$ for each $x \not \in S$. The main portion of this paper makes this argument 
quantitative, with explicit conditions for the geometry of the set $S$ and bounds on the time of the heat
semigroup that permit the construction of a dual certificate. The main resource for the results presented here
is controlling the decay of the heat kernel on the graph. With bounds on the heat kernel and the discrete geometry of the graph, we have an alternative to elements of harmonic analysis that have been used to construct dual certificates for similar results in the Euclidean setting
\cite{Candgrand}.

The remainder of this paper is organized as follows: In Section~\ref{sec:prelim}, we fix notation.
In the interest of keeping the exposition accessible, we first treat the noiseless case in Section~\ref{sec:noiseless},
and then build the case for noisy recovery in Section~\ref{sec:noisyrecovery}. 
The last section considers the more general case of a spatially dependent time parameter for the heat kernels.

\section{Preliminaries}\label{sec:prelim}

Much of the material under consideration is based on the concept of a weighted graph
and an associated graph Laplacian.

\begin{definition}
An edge-weighted finite graph is a triple $(V,E,b)$ where $V$ is the finite set of vertices,
the edge set $E$ contains subsets of $V$ of size two, and $b: V\times V \to [0,\infty)$ 
has the property $b(x,y) = b(y,x)>0$ if and only if $\{x,y \} \in E$.
We say that $d: V \times V \to [0,\infty)$ is a distance function that is compatible with 
the weight $b$ if it is a metric on $V$ such that for $x , y \in V$, $d(x,y) = \min_{z \in V} ({d(x,z)+d(z,y)})$, and
for each $x \in V$, 
$$ \sum_{y: \{x,y\} \in E}b(x,y) (d(x,y))^2 \le 1 \, . $$
\end{definition}

This type of distance has been investigated by Davies in the study of semigroups generated by graph Laplacians \cite{DaviesII}. 
If we assign to each edge $\{x,y\}$ the weight  $b(x,y) = (d(x,y))^{-2} / k$ with $k$ the maximal degree of the graph,
then the definition of a graph Laplacian is given by a difference quotient with similarities to Euclidean geometry.

\begin{definition} 
	The valency of a vertex $x \in V$ is the sum of all the weights on the edges that are incident to $x$.  We denote it as
	$$
	\text{val}(x) = \sum_{y: \{x,y\} \in E} b(x,y) \, .
	$$
	We define $\displaystyle{\kappa:=\norm{\text{val}}_{\infty}=\max_{x\in V}\{\text{val}(x)\}.}$
\end{definition}

\begin{definition}
Let $(V,E,b)$ be an edge-weighted finite graph, then the associated graph Laplacian $\Delta$ is defined in
terms of the quadratic form $f \mapsto \langle \Delta f, f\rangle$ on $\ell^2(V)$,
$$
   \langle \Delta f, f\rangle = - \frac 1 2 \sum_{\{x,y\} \in E} b(x,y) |f(x)-f(y)|^2 \, .
$$
\end{definition}

The operator $\Delta$ is Hermitian and hence we can define the associated heat semigroup 
$\{e^{t\Delta}\}_{t \ge 0}$, either through the spectral representation or by the power series
expansion of the exponential function. We also describe this semigroup in terms of its kernel function,
$$
    K(t,x,y) \equiv (e^{t \Delta} \delta_y) (x) \, .
$$
The main problem we investigate is under which conditions we can identify $g$ when observing $f=e^{t\Delta} g + \omega$.
We consider a class of signals whose support size is bounded by $J$ and introduce a quantitative notion of separation
for the support.
 \begin{definition} Given a metric $d$ on $V$,
 a set $S \subset V$ satisfies $S \in \mathcal{G}_{J,D}$ if and only if
 $ |S| \le J$ and if $\{x,y \} \subset S$ then $d(x,y) \ge D$.
 We denote the set of functions whose support is in $\mathcal{G}_{J,D}$ by
 $\Gamma_{J,D}$.
 \end{definition}
This paper will establish approximate inverses for the heat equation in both the noiseless and the noisy cases.
%
\section{Recovery in the Noiseless Case}  \label{sec:noiseless}
We claim for small $t$ and a minimum distance between support vertices that we can create a dual certificate that facilitates recovering the signal $g$ from the observed signal $f$. The following  result can be found in Cand{\`e}s
and Fernandez-Granda \cite{Candgrand}, as well as Tang \cite{Tang}.

\begin{theorem}\label{thm:dualcert}
Let $g \in \ell^2(V)$, $g = \sum_{v \in S} c_j \delta_{v_j}$ and support $S \subsetneq V$.
Suppose there exists $a \in \ell^2(V)$ such that $e^{t\Delta}a=:h$ has the following properties:
\begin{enumerate}
	\item $h(v_j)=c_j/|c_j|$ for all $v_j \in S$
	\item $\vert h(x)\vert <1$ for all $x \in V \backslash S$,
\end{enumerate}
then $g$ is the unique solution to the $\ell^1$-minimization problem (LP) (as in Definition \ref{LP})
satisfying $e^{t\Delta}g=f.$
\end{theorem}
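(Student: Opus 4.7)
The plan is to execute the standard dual certificate argument that underpins compressed sensing. Let $\tilde g \in \ell^2(V)$ be any feasible vector, so $e^{t\Delta}\tilde g = f = e^{t\Delta} g$; the goal is to show $\|g\|_1 \leq \|\tilde g\|_1$, with equality forcing $\tilde g = g$.

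The first step is to use self-adjointness of $e^{t\Delta}$ to move the heat operator onto the variable side of the pairing with $h = e^{t\Delta} a$:
\[
\langle \tilde g, h\rangle = \langle \tilde g, e^{t\Delta} a\rangle = \langle e^{t\Delta} \tilde g, a\rangle = \langle f, a\rangle = \langle e^{t\Delta} g, a\rangle = \langle g, h\rangle.
\]
The right-hand side is easy to evaluate: since $g$ is supported on $S$ with $g(v_j) = c_j$, and property (1) gives $h(v_j) = c_j/|c_j|$, one obtains $\langle g, h\rangle = \sum_{j=1}^{J} |c_j| = \|g\|_1$, a nonnegative real number.

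The second step combines this identity with a H\"older-type bound. Properties (1) and (2) jointly yield $\|h\|_\infty \leq 1$, so the triangle inequality and the absolute-value bound give
\[
\|g\|_1 = \langle \tilde g, h\rangle \leq \sum_{x \in V} |\tilde g(x)|\,|h(x)| \leq \|\tilde g\|_1,
\]
which proves $g$ is an $\ell^1$-minimizer. For uniqueness, suppose $\|\tilde g\|_1 = \|g\|_1$, so that every inequality above is an equality. Splitting the middle sum over $S$ and $V \setminus S$ and invoking the \emph{strict} inequality $|h(x)| < 1$ off $S$ from (2) forces $\tilde g$ to vanish on $V \setminus S$. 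Then $g$ and $\tilde g$ both lie in $\ell^2(S)$ and satisfy $M^t g = M^t \tilde g$, and the invertibility of $M^t$ on $\ell^2(S)$, noted in the introduction, yields $\tilde g = g$.

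The main obstacle is essentially bookkeeping in the equality case, specifically propagating the strict inequality in (2) through the two H\"older-type bounds without being tripped up by complex phases in $\tilde g(x)\overline{h(x)}$. The substantive content of the paper is pushed elsewhere: constructing an $a \in \ell^2(V)$ whose image $h = e^{t\Delta} a$ simultaneously satisfies (1) and (2) is the place where the geometry of $S$ and the heat-kernel decay estimates are needed, and that is presumably the focus of the sections to follow.
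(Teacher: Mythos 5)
Your proof is correct, and it is structured differently from the paper's. Both arguments pivot on the same identity $\langle \tilde g, h\rangle = \langle e^{t\Delta}\tilde g, a\rangle = \langle g, h\rangle$ obtained from self-adjointness of $e^{t\Delta}$, but from there the paper argues by contradiction: it sets $w = \tilde g - g$, splits $w = w_S + w_{S^c}$, uses $\langle h, w\rangle = 0$ to compare $\|w_S\|_1$ with $\|w_{S^c}\|_1$, and runs a triangle-inequality chain to get $\|g\|_1 > \|g\|_1$. You instead prove minimality directly via the H\"older bound $\|g\|_1 = \langle \tilde g, h\rangle \le \|\tilde g\|_1$ and then analyze the equality case, using the strict bound $|h(x)|<1$ off $S$ to force $\operatorname{supp}\tilde g \subset S$ and finishing with injectivity of $M^t$ on $\ell^2(S)$. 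The trade-off: your route requires the extra input that $M^t$ is injective on $\ell^2(S)$ (harmless, since $e^{t\Delta}$ is positive definite so its compression to $\ell^2(S)$ is too, and the paper later proves a quantitative version in Proposition~\ref{inverse}); in exchange it cleanly covers the case where $\tilde g - g$ is supported entirely inside $S$, which the paper's chain does not actually handle --- its strict inequality $\|w_S\|_1 < \|w_{S^c}\|_1$ is vacuous when $w_{S^c}=0$, and the intermediate step $\|w_S\|_1 = |\langle h, w_{S^c}\rangle|$ is only a lower bound on $\|w_S\|_1$ in general (the inequality needed there points the wrong way). So your version is, if anything, the more careful of the two; just make the appeal to injectivity of $M^t$ on $\ell^2(S)$ explicit (positive definiteness of $e^{t\Delta}$ suffices) rather than citing the introduction's one-line remark.
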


\begin{proof}
Assume such a $h$ exists and let $\tilde{g}$ be a solution to the minimization problem described.  Then, $\tilde{g}$ is such that $e^{t\Delta}\tilde{g}=e^{t\Delta}g.$  Moreover,\\
\begin{equation}
	\begin{split}
		\langle \tilde{g},h \rangle & = \langle e^{t\Delta}\tilde{g},a \rangle \\
		&= \langle g, e^{t\Delta}a \rangle \\
		&= \langle g,h \rangle.\\
	\end{split}
\end{equation}
Now, assuming $\tilde{g} \neq g$ then $\tilde{g}=g+w$, with $w \ne 0$ and 
$\langle w,h \rangle = 0$. We can now decompose $w$ into a part that is zero outside of the support of $g$ and 
the remainder, $w = w\chi_S + w\chi_{S^c}$.  Here, for any set $Q\subset V$, $\chi_Q$ is the characteristic function of $Q$, 
$\chi_Q = \sum_{v \in Q} \delta_v$.
From $e^{t \Delta} \tilde g= f$, $$0=\langle h,w \rangle = \langle h, w\chi_S \rangle + \langle h, w\chi_{S^c} \rangle \, ,$$
so $$ \vert \langle h, w\chi_S \rangle\vert = \vert \langle h, w\chi_{S^c} \rangle \vert$$
and hence
$$\norm{w\chi_S}_1 = \vert \langle h, w\chi_{S^c}\rangle \vert \leq \norm{w\chi_{S^c}}_1 \cdot \norm{h \chi_{S^c}}_{\infty}.$$
Using property (2) for $h$ then $$\norm{w\chi_S}_1 < \norm{w\chi_{S^c}}_1.$$

Now considering
\begin{equation}
	\begin{split}
		\norm{g}_1 & \geq \norm{\tilde{g}}_1 = \norm{g+w}_1\\
		& = \norm{g+w\chi_S+w\chi_{S^c}}_1\\
		& = \norm{g + w\chi_S}_1 + \norm{w\chi_{S^c}}_1\\
		& \geq \norm{g}_1 - \norm{w\chi_S}_1 + \norm{w\chi_{S^c}}_1\\
& > 
\norm{g}_1\\
\end{split}
\end{equation}
gives a contradiction and hence $w=0$, proving the solution is unique.
\end{proof}

The next step is to establish the existence of such a dual certificate.
We recall that we assume a distance function on the graph that is compatible with the weight
in the definition of the graph Laplacian.

To prepare the construction of the dual certificate, we
derive a lower bound for the diagonal entries of the heat kernel based on spectral properties of the Laplacian.

	\begin{lemma}\label{lower}
	Suppose  $G=(V,E,b)$ is an edge-weighted finite connected graph that has $N$ vertices and maximum valency   $\kappa=\norm{\text{val}}_{\infty}.$  Let $\Delta$ be the graph Laplacian of $G$
	with  $\lambda$ the smallest non-zero eigenvalue of $-\Delta$.	 
	Then for $x \in V$,
	$$ \frac{1}{N}+e^{-t\left(\frac{N}{N-1}\right)\kappa}
	\left(\frac{N-1}{N}\right)
	\le  K\left(t,x,x\right) \le  \frac{1}{N}+e^{-t\lambda}\left(\frac{N-1}{N}\right) \,. $$
	
\end{lemma}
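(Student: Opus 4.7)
The plan is to diagonalize $-\Delta$ and read off the bounds from the spectral expansion of the heat kernel.

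First I would set up notation: since $G$ is a finite connected graph, $-\Delta$ is a positive semidefinite Hermitian operator on $\ell^2(V)$ with real eigenvalues $0 = \mu_0 < \mu_1 \le \mu_2 \le \dots \le \mu_{N-1} = \|\Delta\|$, where $\mu_1 = \lambda$ is the spectral gap and the simplicity of $\mu_0$ follows from connectedness. Let $\{\phi_k\}_{k=0}^{N-1}$ be a corresponding orthonormal eigenbasis. The bottom eigenvector is then forced to be the normalized constant $\phi_0 \equiv 1/\sqrt{N}$, so $|\phi_0(x)|^2 = 1/N$ for every $x \in V$.

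Next I would expand the heat kernel. From the definition $K(t,x,x) = (e^{t\Delta}\delta_x)(x)$ and the spectral representation,
$$
   K(t,x,x) = \sum_{k=0}^{N-1} e^{-t\mu_k} |\phi_k(x)|^2 = \frac{1}{N} + \sum_{k=1}^{N-1} e^{-t\mu_k} |\phi_k(x)|^2 \, .
$$
The completeness relation $\sum_{k=0}^{N-1} |\phi_k(x)|^2 = \|\delta_x\|^2 = 1$ shows that the $|\phi_k(x)|^2$ for $k \ge 1$ are nonnegative weights summing to $(N-1)/N$. Thus the tail is a convex combination (scaled by $(N-1)/N$) of the exponentials $\{e^{-t\mu_k}\}_{k \ge 1}$, each of which lies in the interval $[e^{-t\|\Delta\|}, e^{-t\lambda}]$.

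The two inequalities then follow immediately: bounding every $e^{-t\mu_k}$ above by $e^{-t\lambda}$ yields the upper estimate, and bounding every $e^{-t\mu_k}$ below by $e^{-t\|\Delta\|}$ yields the lower estimate, in both cases contributing a factor of $(N-1)/N$ from the sum of the weights. There is no real obstacle here; the only point that needs care is justifying that $\phi_0$ may be taken to be the constant vector, which in turn hinges on the connectedness of $G$ ensuring that $0$ is a simple eigenvalue of $-\Delta$.
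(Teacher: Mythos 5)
Your proposal is correct and follows essentially the same route as the paper: the paper splits $\delta_x = \mu + \nu_x$ into the normalized constant part and its orthogonal complement (with $\|\mu\|^2 = 1/N$, $\|\nu_x\|^2 = (N-1)/N$) and bounds $\langle e^{t\Delta}\nu_x,\nu_x\rangle$ between $e^{-t\|\Delta\|}\|\nu_x\|^2$ and $e^{-t\lambda}\|\nu_x\|^2$, which is exactly your eigenbasis expansion written without explicitly diagonalizing. The only difference is presentational: you make the weights $|\phi_k(x)|^2$ explicit, while the paper packages the same estimate as an operator inequality on the zero-sum subspace.
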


\begin{proof} We recall that the constant function is in the eigenspace of $\Delta$ corresponding to 
	eigenvalue zero. We can split $\delta_x$ in a linear combination of a constant and a zero-summing vector,
	$$
	   \delta_x = \nu_x + \mu
	$$
	where $\nu_x = \delta_x - \mu$, and the constant function $\mu = \frac 1 N$ satisfies $\Delta \mu = 0$.
	We compute the (squared) norms,
	yielding $$\|\mu\|_2^2 = \frac 1 N  \text{~ and ~~}  \norm{\nu_x}_2^2=1-\frac{1}{N}=\frac{N-1}{N} \, .$$

	Next, we consider the diagonal entries of the heat semigroup:
	\begin{equation}
		\begin{split}
			K(t,x,x) &= \langle e^{t\Delta}\delta_x,\delta_x\rangle = \langle e^{t\Delta}(\mu +\delta_x-\mu), \mu + \delta_x-\mu \rangle \\
			&= \langle e^{t\Delta}\mu, \mu \rangle + \langle e^{t\Delta}\mu, \nu_x \rangle+ \langle  e^{t\Delta}\nu_x, \mu \rangle + \langle e^{t\Delta}\nu_x, \nu_x \rangle \\
			&= \norm{\mu}_2^2 + \langle e^{t\Delta}\nu_x, \nu_x \rangle \\
			&= \frac{1}{N} + \frac{\langle e^{t\Delta}\nu_x, \nu_x \rangle }{\norm{\nu_x}_2^2}\norm{\nu_x}_2^2\\
			&\geq \frac{1}{N}+\left(\frac{N-1}{N}\right)e^{\left(t\langle \Delta \nu_x, \nu_x\rangle/\norm{\nu_x}_2^2\right)}\quad \text{ (by Jensen's inequality for operators)} \\
			&= \frac{1}{N}+\left(\frac{N-1}{N}\right)e^{\left(-t\left(\frac{N}{N-1}\right)\text{val}(x)\right)} \\
			&\geq \frac{1}{N}+\left(\frac{N-1}{N}\right)e^{-t\left(\frac{N}{N-1}\right)\kappa}.
		\end{split}
	\end{equation}
	A similar line of reasoning gives
	$$
	K(t,x,x) \le \frac{1}{N}+\left(\frac{N-1}{N}\right) e^{-t\lambda} \, ,
	$$
	where we use $\langle e^{t \Delta} \nu_x, \nu_x\rangle \le e^{-t \lambda} \norm{\nu_x}_2^2$
	because $\nu_x$ is orthogonal to $\mu$. 
\end{proof}


We rely on a work by Folz \cite{Folz}, building on Davies \cite{Davies} to bound the off diagonal entries of the heat kernel matrix.

\begin{lemma}[Theorem 2.1 \cite{Folz}]\label{dave}
	Let $(V,E,b)$ be an edge-weighted finite graph and $d$ a compatible distance function,
	then the heat kernel has the bound for each $x, y \in V$ and $t>0$,
	  $$K(t,x,y) \leq \exp\Big[-\frac 1 2 d(x,y)\log\left(\frac{d(x,y)}{2et}\right)\Big] \, .$$
	\end{lemma}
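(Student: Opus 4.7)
The plan is to follow the weighted-semigroup method of Davies, adapted to discrete graphs by Folz: control each off-diagonal kernel entry by conjugating $e^{t\Delta}$ with an exponential weight, bound the resulting non-self-adjoint semigroup in operator norm on $\ell^2(V)$, and finally optimize over the Lipschitz constant of the weight. Concretely, fix the target pair $x,y$ and a parameter $\alpha \geq 0$, and use the reference weight $\psi(z) = \alpha\, d(z,y)$. Form the conjugated Laplacian $\Delta_\psi f = e^{\psi}\Delta(e^{-\psi}f)$. The identity
$$
K(t,x,y) = \langle e^{t\Delta}\delta_y, \delta_x\rangle = e^{\psi(y)-\psi(x)}\,\langle e^{t\Delta_\psi}\delta_y, \delta_x\rangle \leq e^{-\alpha d(x,y)}\,\|e^{t\Delta_\psi}\|_{\ell^2 \to \ell^2}
$$
reduces the problem to estimating the operator norm of $e^{t\Delta_\psi}$.

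The main analytic step is a quadratic-form estimate on the symmetric part of $\Delta_\psi$. Expanding $\langle \Delta_\psi f, f\rangle$ as an edge sum and using $e^u + e^{-u} - 2 = 2(\cosh u - 1)$ to split off the self-adjoint contribution from the perturbation, one arrives at a pointwise bound of the form
$$
\mathrm{Re}\,\langle \Delta_\psi f, f\rangle \leq \langle \Delta f, f\rangle + \sum_{x \in V}|f(x)|^2 \sum_{z:\,\{x,z\}\in E} b(x,z)\bigl(\cosh(\psi(z)-\psi(x))-1\bigr).
$$
Discarding the non-positive Laplacian term, invoking $|\psi(z)-\psi(x)|\leq \alpha\, d(x,z)$, and then using the compatibility condition $\sum_{z} b(x,z)(d(x,z))^2 \leq 1$, the inner sum is controlled uniformly in $x$ by a function $\varphi(\alpha)$ that grows like $e^\alpha$ as $\alpha \to \infty$. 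Exponentiation of the numerical range then yields $\|e^{t\Delta_\psi}\|_{\ell^2 \to \ell^2} \leq e^{t\varphi(\alpha)}$, and combining with the previous display gives
$$
K(t,x,y) \leq \exp\bigl[-\alpha d(x,y) + t\,\varphi(\alpha)\bigr].
$$

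The conclusion follows by minimizing the right-hand side in $\alpha \geq 0$. The stationary condition $t\,\varphi'(\alpha) = d(x,y)$, in the asymptotic regime $d(x,y) > 2et$, gives $\alpha$ of order $\log(d(x,y)/(2t))$, and back-substitution collapses the estimate to the advertised form $\exp[-\tfrac{1}{2}d(x,y)\log(d(x,y)/(2et))]$. In the complementary regime $d(x,y) \leq 2et$ the stated bound is vacuous, so nothing is to be shown.

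The delicate part is pinning down $\varphi$ sharply enough that precisely the constant $2e$ appears inside the logarithm after optimization; the normalization $\sum_{z} b(x,z)(d(x,z))^2 \leq 1$ in the definition of a compatible distance is calibrated exactly to make this work out. Since the estimate is attributed to Folz, my intention would be to cite \cite{Folz} for the operator-norm step and present only the optimization of $\alpha$, which is the part of the argument that is invoked directly in the constructions of Section~\ref{sec:noiseless}.
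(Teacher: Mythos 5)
The paper offers no proof of this lemma at all: it is imported verbatim as Theorem 2.1 of \cite{Folz}, so your decision to ultimately defer the operator-norm estimate to that citation puts you on exactly the same footing as the authors. Your sketch of the underlying Davies exponential-weight argument (conjugate by $e^{\psi}$ with $\psi=\alpha\,d(\cdot,y)$, bound the numerical range of $\Delta_\psi$, optimize over $\alpha$) is the correct method and is indeed how Folz proceeds; the only quantitative caveat is that to land precisely on the constant $2e$ you need $\varphi(\alpha)=\tfrac12 e^{2\alpha}$ rather than growth of order $e^{\alpha}$, which comes from $\cosh u - 1 \le \tfrac{u^2}{2}e^{|u|}$ together with the normalization $d(x,z)\le 1$ on edges --- a condition that is part of Folz's notion of an adapted metric but is not explicitly included in this paper's definition of a compatible distance, so strictly speaking both you and the paper are leaning on a hypothesis the stated definition does not guarantee.
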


The bounds on the heat kernel establish invertibility of the operator on $\ell^2(S)$, if the vertices
in $S$ are sufficiently separated. 

\begin{proposition}\label{inverse}
	Given an edge-weighted finite connected graph $G=(V,E,b)$ with a compatible distance function $d$, maximum valency    $\kappa:=\norm{\text{val}}_{\infty}$ ,
	the graph Laplacian $\Delta$, and $|V|=N$. Let $S$ have the separation of elements requirement, such that $S \in \mathcal{G}_{J,D}$.  Consider the operator $M^t:=e^{t\Delta}$  on $\ell^2(S)$.
	For $t \le T$, if
	$$(J-1)\left(\frac{2eT}{D}\right)^{D/2}<\frac{1}{N}+e^{-T\left(\frac{N}{N-1}\right)\kappa}
	\left(\frac{N-1}{N}\right),$$
	then $M^t$ is invertible on  $\ell^2( S)$ and the operator norm of the inverse is bounded by
	\begin{equation}\label{Mtinversebounds}
		\|M^{-t}\| \le \Bigl( \frac{1}{N}+e^{-T\left(\frac{N}{N-1}\right)\kappa}
		\left(\frac{N-1}{N}\right)
		- (J-1)\left(\frac{2eT}{D}\right)^{D/2} \Bigr)^{-1} \, .
	\end{equation} 
	
\end{proposition}
\begin{proof}
	We note that the assumed inequality implies that $D>2eT$, otherwise this contradicts with the right-hand side
	being bounded above by one. Thus, we can use monotonicity in $d(x,y) \ge D$ to further estimate Folz's result (Lemma \ref{dave}),
	$$
	K(t,x,y)\leq \exp\left[-\frac 1 2 D\ln\left(\frac{D}{2et}\right)\right]
	= \left( \frac{2et}{D} \right)^{D/2},
	$$
	and then use monotonicity in $t$ to bound off-diagonal elements of $M^t$ for each $t \le T$.  That is, 
	$$
	K(t,x,y) \le \left( \frac{2et}{D} \right)^{D/2} \le \left( \frac{2eT}{D} \right)^{D/2}.
	$$
	Now using the Levy-Desplanques Theorem, a corollary to the Ger\u{s}gorin's theorem, (see \cite{HornJohnson}, Corollary 5.6.17), $M^t$ is invertible because
	it is symmetric and each eigenvalue is bounded below by 
	the minimum among the diagonal entries minus the largest sum occurring among the $(J-1)$ off-diagonal entries in each row. Consequently, the operator norm of the inverse is bounded by the inverse of the lower bound for the 
	smallest eigenvalue of $M^t$.
\end{proof} 

Now that we have shown the heat semigroup is invertible on the vertices in the support of our signal, we look towards including the non-support vertices.  To do so, we will use a classical result from Varah \cite{VARAH},
which controls the $\ell^\infty$ norm under $M^{-t}$.

\begin{lemma}
\label{varah}
Given $G$, $\Delta$, $S \in \mathcal{G}_{J,D}$ and $M^t$ as in Proposition \ref{inverse}, with $t\le T$, then if $h\in \ell^{\infty}(S)$ satisfies $h(v_j)=c_j/|c_j|$ for $j \in \{1, 2, \dots, J\} $
and $a = M^{-t} h$, we have 
$$
\|a\|_\infty  \le \left(\frac{1}{N}+e^{-T\left(\frac{N}{N-1}\right)\kappa}
\left(\frac{N-1}{N}\right) - (J-1) \left(\frac{2eT}{D}\right)^{D/2} \right)^{-1}.
$$
\end{lemma}
\begin{proof} 
If a matrix $A=(a_{k,j})_{k,j=1}^J$ is strictly diagonally dominant matrix, then the operator norm of its inverse on $\ell^\infty$
is bounded by 
$$\norm {A^{-1}}_{\infty,\infty} \le \left(\min_{k}\left(\lvert a_{kk} \rvert - \sum_{j \neq k} \lvert a_{kj} \rvert \right)\right)^{-1}.$$
Now inserting the heat semigroup $M^t$ for $A$ gives 
in our case  that if $|h| \vert\chi_S = 1$, then $a= M^{-t} h$ has 
$$\|a\|_\infty \le 
\bigl(\min_x (K(t,x,x) - (J-1)\sum_{y: y \ne x} K(t,x,y) \bigr)^{-1} \, .$$
With Folz's bound, this yields again the same explicit expression as for the $\ell^2$-norm,
$$
   \|a\|_\infty  \le \left(\frac{1}{N}+e^{-T\left(\frac{N}{N-1}\right)\kappa}
   \left(\frac{N-1}{N}\right) - (J-1) \left(\frac{2eT}{D}\right)^{D/2} \right)^{-1}.
$$
\end{proof}

\begin{theorem}\label{main}
	Suppose we have a finite edge-weighted connected graph, $G=(V,E,b)$ with a compatible distance function, $d$ and maximum valency,    $\kappa:=\norm{\text{val}}_{\infty}.$ Let $\Delta$ be the graph Laplacian of $G$,  and $N = |V|$ Given a subset of vertices of size $J$ with a minimum distance $D$ between any two vertices, that is, $S\in \mathcal{G}_{J,D},$
	and let $\zeta$ be the smallest distance between vertices in $V$.

	If the following relation hold for  $J$, $T \ge t$, and $D$, 
\begin{equation}
    (J-1)\left(\left(\frac{4eT}{D}\right)^{D/4}\left(1+\left(\frac{eT}{D}\right)^{D/4}\right)\right)<\frac{1}{N}+e^{-T\left(\frac{N}{N-1}\right)\kappa}\left(\frac{N-1}{N}\right)-\left(\frac{2eT}{\zeta}\right)^{\zeta/2}
\end{equation}
	then for any choice $\epsilon_j \in \{\pm 1\}$,  $j \in \{1, 2, \dots, J\}$, there is a linear combination, $h(x)=\displaystyle\sum_{j=1}^{J}a_je^{t\Delta}(x,v_j)$, with the following properties:
	\begin{enumerate}[label=(\roman*)]
		\item \label{i} $\norm{h}_{\infty}=1$
		\item \label{ii} $h(v_j)=\epsilon_j$, for all $v_j \in S$
		\item \label{iii} $\lvert h(x) \rvert < 1$ for all $x \in V \backslash S$.
	\end{enumerate} 
\end{theorem}
\begin{proof} 
	Note:  As defined above, $h|_S=M^{t}a$ and $\norm{h}_{\infty}=1.$	Using Proposition~\ref{inverse}, $M^{t}$ is strictly diagonally dominant and Lemma~\ref{varah} applies, with $a|_S=M^{-t}h|_S$.  Thus, $$\norm{a}_{\infty}=\norm{{M}^{-t}h}_{\ell^{\infty}(S)}\leq \norm{{M}^{-t}}_{{\infty},\infty}\cdot \norm{h}_{\infty}=\norm{{M}^{-t}}_{\infty, \infty}.$$ 
	Next, we extend the heat kernel from $M^{t}$ to $K\left(t,x,y\right)$, where $x,y \in V$.
	We wish to show $\lvert h(v) \rvert < 1$, for every $v \notin S$, knowing we have picked $a_j$'s that make $h(v_j) = \epsilon_j$ for all $v_j \in S.$
	
	So consider an arbitrary  $v \not \in S.$ By the separation of elements in $S$, there exists at most one $v_1 \in S$ such that $\zeta \leq d(v,v_1) < D/2$ and for all other $v_j \in S$ we have $d(v,v_j)\geq D/2.$  Using Folz's bound \cite{Folz} we have
\begin{equation}
	K(t,v,v_1)
	\leq \left(\frac{2et}{\zeta}\right)^{\zeta/2}
	\text{~ and ~~}
		K(t,v,v_j)
		\leq \left(\frac{4et}{D}\right)^{D/4},
	\end{equation}
	and if $v \not \in S$, $t \le T$, 
\begin{equation}
    \begin{split}
    \lvert h(v) \rvert &\leq \norm{a}_{\infty} \left(\left(\frac{2et}{\zeta}\right)^{\zeta/2}
		+ (J-1) \left(\frac{4et}{D}\right)^{D/4} \right)\\
      &\leq \norm{a}_{\infty}\left(\left(\frac{2eT}{\zeta}\right)^{\zeta/2}
		+ (J-1) \left(\frac{4eT}{D}\right)^{D/4} \right)\\
  &< \norm{a}_{\infty} \left(\frac{1}{N}+e^{-T\left(\frac{N}{N-1}\right)\kappa}
		\left(\frac{N-1}{N}\right)-(J-1)\left(\frac{2eT}{D}\right)^{D/2}\right), \text{ from assumption}\\
  &< 1 \text{ from Lemma \ref{varah}}. 
    \end{split}
\end{equation}
\end{proof}

\begin{corollary}
Under the assumptions of the preceding theorem, a solution $g$ to (LP) (see Definition \ref{LP}) is according to Theorem~\ref{thm:dualcert} the unique 
function $g$ such that $f=e^{t\Delta} g$.
\end{corollary}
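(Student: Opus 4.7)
The plan is to combine Theorem~\ref{main} with the dual-certificate theorem at the start of Section~\ref{sec:noiseless}, and then appeal to the invertibility of $e^{t\Delta}$ on the finite-dimensional space $\ell^2(V)$ to strengthen ``unique LP solution'' to ``unique preimage.''

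First, for the sparse signal $g$ with support $S=\{v_1,\dots,v_J\}$ and signs $\epsilon_j := c_j/|c_j|$, apply Theorem~\ref{main} with this choice of $\epsilon_j$. Under conditions (1) and (2), the theorem produces coefficients $(a_j)_{j=1}^J$ so that
$$h(x)=\sum_{j=1}^J a_j\, e^{t\Delta}(x,v_j)$$
satisfies $\|h\|_\infty = 1$, $h(v_j)=\epsilon_j$, and $|h(x)|<1$ for $x\in V\setminus S$. Setting $a=\sum_{j=1}^J a_j \delta_{v_j}\in \ell^2(V)$, we have $h=e^{t\Delta}a$, so $h$ meets the hypotheses of the dual-certificate theorem at the beginning of Section~\ref{sec:noiseless}. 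That theorem then yields that $g$ is the unique solution of (LP) that satisfies $e^{t\Delta}g=f$.

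Second, upgrade the conclusion to uniqueness among \emph{all} preimages. Since $\Delta$ is Hermitian on the finite-dimensional space $\ell^2(V)$, its spectrum is real and nonpositive, so the spectrum of $e^{t\Delta}$ lies in $(0,\infty)$. Hence $e^{t\Delta}$ is invertible on $\ell^2(V)$, and the equation $e^{t\Delta}\tilde g=f$ has a unique solution $\tilde g=e^{-t\Delta}f$. Combining with the first step, the LP solution $g$ coincides with this unique preimage, which is exactly the claim of the corollary.

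The argument is essentially bookkeeping: the only mild subtlety is verifying that the function produced by Theorem~\ref{main} has the form $h=e^{t\Delta}a$ with $a\in \ell^2(V)$ required by the dual-certificate theorem (immediate from the construction), and the promotion to uniqueness of any preimage follows from the standard observation that, on a finite graph, $e^{t\Delta}$ is a positive invertible operator. There is no real obstacle; the work was already absorbed into Theorem~\ref{main}.
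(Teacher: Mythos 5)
Your proposal is correct and matches the paper's intended (unwritten) argument: Theorem~\ref{main} with $\epsilon_j = c_j/|c_j|$ supplies exactly the dual certificate required by the first theorem of Section~\ref{sec:noiseless}, which then gives uniqueness of the LP solution. Your added observation that $e^{t\Delta}$ is invertible on the finite-dimensional $\ell^2(V)$ is consistent with the paper (it is stated in the Introduction) and harmlessly strengthens the bookkeeping.
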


\section{Recovery with Noisy Measurements} \label{sec:noisyrecovery}

Next, we consider the effect of noise. It is fortuitous that there is no need to alter the construction
of the dual certificate in the treatment of a noisy measurement. We proceed as described in the Introduction
and find a solution to (LP) (as in Definition \ref{LP}). If the noise norm is known, 
so $$
   f = e^{t \Delta} g + \omega
$$
has been observed with $g,\omega$ unknown but $\|\omega\|_2 \le \epsilon$
then the accuracy of the recovered vector is
controlled proportional to $\epsilon$.

We will show that one can stably recover $g$ by finding a $\hat g$ minimizer to (LP) (as in Definition \ref{LP}), which then satisfies
$ \| \hat g - g \|_2 \le C \epsilon$ with some fixed constant $C$.
By our bounds, we know the smallest eigenvalue, $\lambda$, of $M^t$, as in Proposition \ref{inverse}, is bounded by:
$$\lambda \ge \frac{1}{N}+e^{-t\left(\frac{N}{N-1}\right)\kappa}
	\left(\frac{N-1}{N}\right) +(J-1) \left(\frac{2eT}{D}\right)^{D/2} > 0 .$$  
	Consequently, $\|M^{-t}\|\le 1/\lambda$. Because $M^t$ is a contraction, $\lambda <1$, and we know $\|M^{-t}\| \ge 1$. 
We split $\ell^2(V) = \ell^2(S) \oplus \ell^2(S^c)$ and denote the 
corresponding terms $f = f\chi_S+ f\chi_{S^c}$, where the index contains the support of each term.

\begin{lemma}\label{erroronenorm}
	Let $f= e^{t\Delta} g + \omega$ with $\|\omega\|_2\le \epsilon$ and $\hat g$ be a minimizer of (LP) (see Definition \ref{LP}),
	then
	$$
	\| \hat g - g \|_1 \le \sqrt{J} (\lambda^{-1}) (2 \epsilon + \|{\hat g}\chi_{S^c} \|_1) + \|{\hat g}\chi_{S^c} \|_1 \, .
	$$
\end{lemma}
\begin{proof}
	By the splitting and the fact that $S$ contains the support of $g$,
	$$
	\| \hat g - g \|_1 = \|\hat g\chi_S - g\chi_S \|_1 + \|{\hat g}\chi_{S^c}\|_1 \, .
	$$
	We further estimate the first term using the Cauchy-Schwarz inequality and then use the triangle inequality:
	\begin{align}
		\|\hat g\chi_S - g\chi_S \|_1 &\le \sqrt{J} \|\hat g\chi_S - g\chi_S \|_2 \\
		&\le \sqrt{J} \|M^{-t} \| \|M^t (\hat g\chi_S - g\chi_S)\|_2 \\
		&\le \sqrt{J} (\lambda^{-1}) ( \|e^{t \Delta} (\hat g  - g)\|_2 + \| e^{t \Delta} (\hat g  - g) \chi_{S^c}\|_2 \\
		&\le \sqrt{J} (\lambda^{-1}) (2\epsilon + \| \hat g \chi_{S^c} \|_1) \, .
	\end{align}
	The last inequality is due to the tolerance in the feasible region. With both $\hat{g}$ and $g$ feasible and the triangle inequality we have $\|e^{t\Delta}(\hat{g}-g)\|_2 \leq 2\epsilon$. In addition, we have used the contractivity of $e^{t\Delta}$ on $\ell^2$ and the inequality  $\| \hat g\chi_{S^c} \|_2  \le \| \hat g\chi_{S^c} \|_1$. 
\end{proof}

The following lemmata are
similar to results in Cand{\`e}s and Plan \cite{CPlan}. We keep using $h$, the dual certificate, and do not construct an approximate dual certificate yet, even in the noisy case. 

\begin{lemma}\label{lem:offsuppofghat}
	Assume $g\in \Gamma_{J,D}$, $S\in \mathcal{G}_{J,D}$.  Let $f= e^{t\Delta} g + \omega$ with $\|\omega\|_2\le \epsilon$, $S= \text{supp}(g)$ of size $|S|=J$, and $h=e^{t\Delta}a$, a dual certificate for $g$, satisfying
	$ \|h\chi_{S^c}\|_\infty = \xi < 1  $.  If $a$ is chosen such that $\norm{a}_{\infty} \le \lambda^{-1}$, and $\hat g$ is a minimizer of (LP) (see Definition \ref{LP}), then 
	\begin{equation}
		\norm{\hat g\chi_{S^c}}_1\le 2\sqrt{J}\frac{\lambda^{-1}}{1-\xi}\epsilon \, .
	\end{equation}
\end{lemma}
\begin{proof}
	By assumption, $$\norm{h\chi_{S^c}}_{\infty}= \xi<1.$$
	We proceed as follows:
	\begin{align}
		\langle \hat{g}, h\chi_{S^c} \rangle &\le |\langle \hat{g}\chi_{S^c}, h\chi_{S^c} \rangle|\\
		&\le \norm{\hat{g}\chi_{S^c}}_1\norm{h\chi_{S^c}}_{\infty} \,\, ~~ \text{(by H{\"o}lder's inequality)}\\
		&\le \xi\norm{\hat g\chi_{S^c}}_1.
	\end{align}
	Adding $\norm{\hat g\chi_{S^c}}_1 $ to both sides of the resulting inequality, we have:
	$$
	\langle \hat{g}, h\chi_{S^c} \rangle +\norm{\hat g\chi_{S^c}}_1 \le \xi\norm{\hat g\chi_{S^c}}_1 +\norm{\hat g\chi_{S^c}}_1. 
	$$
	So we have the following relation:
	$$(1-\xi)\norm{\hat g\chi_{S^c}}_1\le \norm{\hat g\chi_{S^c}}_1 -\langle \hat{g}, h\chi_{S^c} \rangle.$$
	With $\langle g,h \rangle = \norm{g}_1$ and $h = h\chi_S + h\chi_{S^c}$, we continue by rewriting the right hand side:
	\begin{align}
		(1-\xi)\norm{\hat g\chi_{S^c}}_1 &\le \norm{\hat g\chi_{S^c}}_1 -\langle \hat{g}, h\chi_{S^c} \rangle \\
		&\le \norm{\hat g}_1-\langle \hat{g}, h \rangle-\langle \hat{g}, h\chi_{S^c} \rangle \\
		&= \norm{\hat g}_1-\langle \hat{g}, h \rangle-\langle \hat{g}, h\chi_{S^c} \rangle+\langle g, h\rangle - \norm{g}_1\\
		&\le \langle g-\hat{g}, h \rangle \quad \quad \text{(by assumption, $\norm{\hat g}_1 \le \norm{g}_1$)}\\
		&= \langle g-\hat{g}, e^{t\Delta}a \rangle \\
		&= \langle e^{t\Delta}(g-\hat{g}),a \rangle \\
		&\le \norm{e^{t\Delta}(g-\hat{g})}_2\norm{a}_2 \quad \text{(by Cauchy-Schwarz inequality)} \\
		&\le 2\epsilon \sqrt{J} \norm{a}_{\infty} \quad \text{(using feasibility of $g, \hat{g}$ and support of $a$)} \\
		&\le 2\epsilon \sqrt{J}(\lambda^{-1}).
	\end{align}
	Multiplying by $(1-\xi)^{-1}$ on both sides of the inequality gives the claimed  results.
\end{proof}

We are ready to conclude the error bound for sparse recovery from noisy measurements.
\begin{theorem}\label{thrm: noisyrecovery}Given a finite edge-weighted connected graph $G=(V,E,b)$, 
	a $J$-sparse signal $g$ on $G$, and $f = e^{t \Delta} g + \omega$ with $\|\omega\|_2 \le \epsilon$. 
	Assume $T$, $J$, and $D$ are as in Theorem~\ref{main}, let
	 $\zeta$ be the smallest distance between vertices in $V$ and define $a$ is in Theorem~\ref{main} and
	 $$\xi:=  \norm{a}_{\infty}\left(\left(\frac{2eT}{\zeta}\right)^{\zeta/2}
		+ (J-1) \left(\frac{4eT}{D}\right)^{D/4} \right).$$ If  $\hat g$ is a solution to the linear program (LP) (see Definition \ref{LP}), then
	\begin{equation}
		\| \hat g - g \|_{2} \le 2 \epsilon  (\lambda^{-1}) \left( 1+\frac{\sqrt{J}(1+\lambda^{-1})}{1 - \xi} \right)  \, .
	\end{equation}
\end{theorem}
\begin{proof}
We note by assumption $\xi<1$ and we can apply Lemma \ref{lem:offsuppofghat}. 
	We start with the triangle inequality and use that $g$ restricted to $S^c$ vanishes,
	\begin{equation}
		\| \hat g - g \|_{2} \le \| (\hat g - g) \chi_S \|_2 +  \| \hat g \chi_{S^c} \|_2 \, .
	\end{equation}
	Our estimate for the last term is as derived below using results from Lemma \ref{lem:offsuppofghat}.
	$$
	\| \hat g \chi_{S^c} \|_2 \le \| \hat g \chi_{S^c} \|_1 \le \norm{\hat g\chi_{S^c}}_1\le 2\sqrt{J}\frac{\lambda^{-1}}{1-\xi}\epsilon \, .
	$$
	

	The contribution on $S$ is controlled by
	$$
	\| (\hat g - g) \chi_S \|_2 \le \| (M^t)^{-1} \| \| M^t (\hat g -g )\chi_S \|_2 
	$$
	and then by extending the domain and using the triangle inequality
	$$
	\| (\hat g - g) \chi_S \|_2 \le \| (M^t)^{-1} \| ( \| e^{t  \Delta} (\hat g -g ) \|_2 + \| e^{t  \Delta} (\hat g -g ) \chi_{S^c} \|_2) \, .
	$$
	Next, using the feasibility condition for (LP) (see Definition \ref{LP}) and the contractivity of $e^{t \Delta}$, 
	$$
	\| (\hat g - g) \chi_S \|_2 \le \| (M^t)^{-1}\| (2\epsilon + \| (\hat g -g ) \chi_{S^c} \|_2) \, .
	$$
 The support of $g$ together with the estimate between $\ell^1$ and $\ell^2$ norms
	gives
 $$
	\| (\hat g - g) \chi_S \|_2 \le \| (M^t)^{-1} \| ( 2 \epsilon + \| \hat g   \chi_{S^c} \|_1) \, .
	$$
	By replacing $\|(M^t)^{-1} \|$ by $\lambda^{-1}$, 
	$$
	\| (\hat g - g) \chi_S \|_2 \le (\lambda^{-1}) ( 2 \epsilon + \| \hat g   \chi_{S^c} \|_1) \, .
	$$
	Thus, 
	
	\begin{equation}
		\| \hat g - g \|_{2} \le (\lambda^{-1}) ( 2 \epsilon + 2\sqrt{J}\frac{\lambda^{-1}}{1-\xi}\epsilon) +  2\sqrt{J}\frac{\lambda^{-1}}{1-\xi}\epsilon \le 2 \epsilon  (\lambda^{-1}) \left( 1+\frac{\sqrt{J}(1+\lambda^{-1})}{1 - \xi} \right)  \, .
	\end{equation}
	
\end{proof}

\section{Approximate Dual Certificates and Recovery for More General Signal Models} 
The error bound we have just derived is also useful to control the effect of a change in our signal acquisition model.
Instead of evolving the sparse signal $g$ under the heat semigroup, we can consider measuring a signal consisting of
heat kernels with a spatially varying time parameter.

For the remainder of this section, we consider the problem of recovering a sparse function $g$ when observing
$$
\tilde f(x) = \sum_{y \in V} e^{t_y \Delta} (x,y) g(y) \, ,
$$
where we assume the time parameters $\{t_y\}_{y \in V}$ are known. The simplest approach to this problem is
to instead treat $\tilde f$ as perturbation $\tilde f=e^{T\Delta} g+\omega$ of the noiseless $f= e^{T\Delta} g$ and solve (LP) (as in Definition \ref{LP}). We wish to find out how big the error is that is caused by this model mismatch if all time parameters satisfy $t_y \le T$.

\begin{proposition} \label{prop:modelmismatch}
	With $f$ and $\tilde f$ as above, and $S\in \Gamma_{J,D},$
	$$\norm{f-\tilde{f}}_2\leq\sqrt{J}|e^{T\norm{\Delta}}-1|\norm{g}_2.$$ 
\end{proposition}
\begin{proof}
	Consider the error squared, $$\norm{f-\tilde{f}}_2^2=\sum_{x\in V}\lvert\sum_{y\in S}(e^{T\Delta}(x,y)-e^{t_y\Delta}(x,y))g(y)\rvert^2,$$
	by first looking at
	\begin{equation}
		\lvert\sum_{y\in S}(e^{T\Delta}(x,y)-e^{t_y\Delta}(x,y))g(y)\rvert\leq \left(\sum_{y\in S}|(e^{T\Delta}(x,y)-e^{t_y\Delta}(x,y)|^2\right)^{1/2}(\sum_{y\in S}|g(y)|^2)^{1/2}.
	\end{equation}
	Thus 
	\begin{equation}
		\begin{split}
			\norm{f-\tilde{f}}_2^2&\leq\sum_{x}\sum_{y\in S}|e^{T\Delta}(x,y)-e^{t_y\Delta}(x,y)|^2\norm{g}_2^2\\
			&=\sum_{y\in S}\norm{e^{T\Delta}(\cdot,y)-e^{t_y\Delta}(\cdot,y)}_2^2\norm{g}_2^2\\
			&=\sum_{y\in S}\norm{(e^{T\Delta}-e^{t_y\Delta})\delta_y)}_2^2\norm{g}_2^2\\
			&\leq \sum_{y\in S} \norm{e^{T\norm{\Delta}}-1}^2\norm{g}_2^2\\
			&=J|e^{T\norm{\Delta}}-1|^2\norm{g}_2^2.
		\end{split}
	\end{equation}
	With this being the bound of the error squared, taking the square root yields the desired result.     
\end{proof}

Applying the error bound from noisy recovery, we control the model mismatch.

\begin{corollary}
	Let $\tilde f$ be as above, and $\hat g$ a solution of the minimization problem (LP) (see Definition \ref{LP}), then
	$$
	\| \hat g -g \|_2 \le 2(\lambda^{-1}) \left( \frac{1+\lambda^{-1}}{1 - \xi}\sqrt{J} + 1 \right)|e^{T \|\Delta\|} - 1| \|g\|_2 \, .
	$$
\end{corollary}
\begin{proof}
	Setting $w=f- \tilde f$ with $\|\omega\|_2 \le \epsilon$ and applying the results from Theorem \ref{thrm: noisyrecovery}, we have:
	$$\| \hat g - g \|_{2} \le 2(\lambda^{-1}) \left( \frac{1+\lambda^{-1}}{1 - \xi}\sqrt{J} + 1 \right)\norm{f-\tilde{f}}_2.$$ 
	Lastly, using Proposition \ref{prop:modelmismatch}
	completes our proof.
\end{proof}

 In the work of both Gross \cite{Gross} and Cand{\`e}s and Plan \cite{CPlan} they consider an approximate dual certificate or vector, $\tilde{h}$, with $\norm{\chi_S(\tilde{h}-h)}_2$ small, if $h$ is the exact dual certificate from before.  In this section we use this $\tilde h$ to improve the errors bounds of new type of linear program introduced in the previous section.
\begin{definition}\label{def:approxdual}
    Given $A$ as any $n \times n$ matrix and $g, \in \mathbb{R}^n$, equipped with a dual certificate, $h$.  Define $S:=supp(g) \subset \{1,2,\dots n\}.$  Then a vector, $\tilde{h} \in \mathbb{R}^n$, in the row space of $A$, with the following properties: 
    \begin{enumerate}
        \item $\norm{(\tilde{h} - h)\chi_S}_2 \leq \frac{1}{4}$, and
        \item $\norm{\tilde{h}\chi_{S^c}}_{\infty} \leq \frac{1}{4}$
    \end{enumerate}
    is called an \emph{approximate dual certificate} \cite{CPlan}. 
\end{definition}

\begin{lemma}\label{lemma:atildebounded}
    Given $h=e^{t\Delta}a$, with, $a\in \ell^2(S)$, $a=M^{-t}h$, and $\|a\|_{\infty}< \lambda^{-1} = \|M^{-t} \|$ as before. Let $\tilde{h}:=e^{t\Delta}\tilde{a}$. Assume $\norm{(\tilde{h} - h)\chi_S}_2 \leq \frac{1}{4}$,  then
    $$\norm{a-\tilde a}_2 \leq \frac{1}{4\lambda}.$$
\end{lemma}
\begin{proof}
    On the support of $g$, $\tilde a = (M^t)^{-1}\tilde h \chi_S,$ and as before, $a = (M^t)^{-1} h = (M^t)^{-1}h\chi_S$. Thus, $$
    \norm{(a-\tilde a)\chi_S}_2=\norm{M^{-t}(\tilde h - h )\chi_S}_2\le \norm{M^{-t}}\norm{\tilde{h}\chi_S - h}_2\le \frac{1}{4}(\lambda^{-1}).$$
\end{proof}
We wish to determine the error bound of $\norm{\hat{g}-g}_2,$ if $\hat{g}$ is our best approximation to $g$ by way of an approximate dual.  We begin with an investigation of our best approximate off the support of the true signal.
\begin{lemma}\label{ghatnonsupport}
Let $\hat g$ be a solution to (LP) (see Definition \ref{LP}) and assume an approximate dual certificate exists, then for $\lambda > 1/3$, 
 \begin{equation}
   \|\hat g \chi_{S^c}\|_1 \le   \frac{(8\sqrt{J}+2)}{3\lambda-1} \epsilon \, .
 \end{equation}
 where $\lambda = \| (M^{t})^{-1}\|^{-1}$ as before.
 \end{lemma}
 \begin{proof}
 We assume $h=e^{t\Delta}a $ with $\norm{a}_{\infty}\le \lambda^{-1}$ as before and define $\tilde{h}=e^{t\Delta}\tilde{a}$ with $\tilde{a}$ chosen such that the following conditions hold:
$$
   \| (\tilde h - h) \chi_S \|_2 \le 1/4 
$$
and
$$ \| \tilde h \chi_{S^c} \|_\infty \le 1/4.$$
To introduce $\|\hat{g} \chi_{S^c}\|_2$, first consider:
$$ 
\langle \hat{g},\tilde h \chi_{S^c} \rangle = \sum_{i=1}^N\hat{g}_i(\tilde h \chi_{S^c} )_i = \sum_{j\in S^c}\hat{g}_j\tilde{h}_j.
$$
With this, we can see by H{\"o}lder's inequality that:
$$
\langle \hat g, \tilde h \chi_{S^c} \rangle \le \|\hat{g}\chi_{S^c}\|_1 \|\tilde{h}_{\chi_{S^c}}\|_{\infty} \le \frac{1}{4} \| \hat g \chi_{S^c} \|_1.
$$
Adding $\|\hat{g}_{S^c}\|_1$ to both sides of the inequality along with a simple rewrite, we now have:
$$
\| \hat g \chi_{S^c}\|_1 \le \frac{4}{3} \left(\|\hat g \chi_{S^c}\|_1 - \langle \hat g, \tilde h \chi_{S^c} \rangle\right).
$$

Now using the properties of the dual certificate $h$, we can insert the identity $
  \|g\|_1 =  \| g \chi_S \|_1 = \langle g, h \chi_S \rangle 
$
and get 
$$
   \| \hat g \chi_{S^c}\|_1 \le \frac 4 3 (\|\hat g \chi_{S^c}\|_1 - \| g \|_1 - \langle \hat g, \tilde h \chi_{S^c} \rangle  + \langle g, h \chi_S\rangle) \, . 
$$

We also know 
$$
   \| \hat g \chi_S \|_1 \ge \langle \hat g, h \chi_S \rangle \, , 
$$
then this permits us to extend the domain on the right-hand side
$$
   \| \hat g \chi_{S^c}\|_1 \le \frac 4 3 (\|\hat g \|_1 - \|g\|_1 - \langle \hat g, \tilde h \chi_{S^c}\rangle 
     - \langle \hat g - g , h \chi_S \rangle   ) \, .
$$
Next, we use the Cauchy-Schwarz inequality and condition (1) of Definition \ref{def:approxdual} to replace $h$ by $\tilde h$: $$-\langle \hat{g}-g, h\chi_S \rangle = \langle g-\hat{g}, h\chi_S - \tilde{h}\chi_S\rangle + \langle g-\hat{g}, \tilde{h}\chi_S\rangle\le \frac 1 4 \| (\hat{g}-g)\chi_S \|_2 - \langle \hat{g}-g, \tilde{h}\chi_S\rangle.$$
Thus,
\begin{equation}\label{eq:1bound}
\begin{aligned}
    \| \hat g \chi_{S^c}\|_1 & \le \frac{4}{3} \left(\|\hat g \|_1 - \|g\|_1 - \langle \hat g, \tilde h \chi_{S^c}\rangle - \langle \hat g - g , \tilde h  \chi_S \rangle  + \frac{1}{4} \| (\hat g - g) \chi_S \|_2 \right) \\
    & = \frac{4}{3} \left(\|\hat g \|_1 - \|g\|_1 - \langle \hat g- g , \tilde h \rangle + \frac{1}{4} \| (\hat g - g) \chi_S \|_2 \right) \, .
\end{aligned}
\end{equation}
The last term can be estimated by
\begin{equation}\label{eq:2bound}
\begin{aligned}
    \| (\hat g - g) \chi_S \|_2 &= \| (M^t)^{-1} M^t (\hat g - g) \chi_S \|_2 \\
    &\le \| (M^t)^{-1} \| \| e^{t \Delta} (\hat g - g) \chi_S \|_2 \\
    &\le \| (M^t)^{-1} \| \| e^{t \Delta} \chi_S (\hat g - g) \|_2.
\end{aligned}
\end{equation}
To bound $\| e^{t \Delta} \chi_S (\hat g - g) \|$, we can rewrite the expression and apply the triangle inequality:
\begin{equation}\label{eq:3bound}
\begin{aligned}
    \| e^{t \Delta} \chi_S (\hat g - g) \|_2 & = \| e^{t \Delta}(\hat g - g) - e^{t \Delta} \chi_{S^c} (\hat g - g) \|_2 \\
    & \le \| e^{t \Delta}(\hat g - g) \|_2 + \| e^{t \Delta} \chi_{S^c} (\hat g - g) \|_2 \\
    & \le 2\epsilon + \|\hat{g}\chi_{S^c}\|_2 \quad \text{with } \hat{g},\,g \text{ feasible and the contractivity of } e^{t\Delta},\\
    & \le 2\epsilon + \|\hat{g}\chi_{S^c}\|_1.
\end{aligned}
\end{equation}


Utilizing the inequalities \eqref{eq:1bound}, \eqref{eq:2bound}, and \eqref{eq:3bound}, along with the minimization of the norm ($\norm{\hat g}_1 \le \norm{g}_1$) and subsequent simplification, we obtain the following expression:
 $$
    \| \hat g \chi_{S^c}\|_1 \le \frac 4 3 \langle g - \hat g, \tilde h \rangle  + \frac 2 3 \| (M^t)^{-1} \| \epsilon 
    + \frac 1 3  \| (M^t)^{-1} \| \| \chi_{S^c} \hat g \|_1 \, .
 $$
Using the definition of $\tilde h = e^{t \Delta}\tilde{a}$, we have
 $$
    (1 - \frac 1 3 \| (M^t)^{-1} \| ) \| \hat g \chi_{S^c}\|_1\le \frac 4 3 \langle e^{t\Delta}(g - \hat{g}), \tilde{a} \rangle  + \frac 2 3 \| (M^t)^{-1} \| \epsilon.
 $$
Applying H\"older's inequality, the feasibility of functions $g$ and $\hat{g}$, and the control over the $\ell^2$-norm of $a - \tilde{a}$ (see Lemma \ref{lemma:atildebounded}),  we can bound $\langle g - \hat{g}, \tilde{h} \rangle$:
\begin{align}
\langle g - \hat{g}, \tilde{h} \rangle &\le \langle e^{t\Delta}(g - \hat{g}), \tilde{a} \rangle \\
&\le \| e^{t \Delta} (g - \hat{g}) \|_2 \|\tilde{a}\|_2 \\
&\le (2\epsilon) \|\tilde{a}-a+a\|_2 \\
&\le (2\epsilon) (\|\tilde{a}-a\|_2+\|a\|_2) \\
&\le (2\epsilon)\left(\frac{1}{4}\|M^{-t}\| + \sqrt{J}\|a\|_{\infty}\right).
\end{align}
Replacing the bound for the inner product, $\langle g - \hat{g}, \tilde{h} \rangle$ and using the substitution, $\norm{a}_{\infty}\le\norm{M^{-t}}=\lambda^{-1}$, we now have:
 $$
   (1 - \frac 1 3 (\lambda^{-1}) ) \| \hat g \chi_{S^c}\|_1  
   \le \frac 4 3 (2\epsilon)(\frac 1 4 (\lambda^{-1}) + \sqrt{J}(\lambda^{-1})) + \frac 2 3 (\lambda^{-1}) \epsilon. 
 $$
 Solving, we have 
 $$
   \| \hat g \chi_{S^c}\|_1   \le (1 - \frac 1 3 (\lambda^{-1}) )^{-1}\left(\frac 4 3 (2\epsilon)(\frac 1 4 (\lambda^{-1}) + \sqrt{J}(\lambda^{-1})) + \frac 2 3 (\lambda^{-1}) \epsilon\right).  
 $$
 Simplifying gives the claimed expression.
\end{proof}

\begin{theorem}\label{prop:approxDC} Let $\tilde f$ be as above, assume $S \in \Gamma_{J,D}$ with $\max_{y \in S} t_y \le T$ and
$T,D,$ and $J$ satisfy the inequalities in the preceding section. 
If $\hat g$ is the solution to (LP) (see Definition \ref{LP}) and an approximate dual exists, then for $\lambda > 1/3$, 
	$$
	\| \hat g - g \|_2 \le C\cdot\epsilon,
	$$
with $C=\displaystyle{ 8 \frac{\sqrt{J}(1+\lambda^{-1})+1}{3\lambda-1}}\, . $
	\end{theorem}
 \begin{proof}
     We consider
$$
  \| \hat g - g \|_2 \le \| (\hat g - g ) \chi_S \|_2 + \| \hat g \chi_{S^c} \|_2
   \le \|(M^t)^{-1} \| \| e^{t \Delta} \chi_S (\hat g - g ) \|_2 + \| \hat g \chi_{S^c} \|_2.
   $$ 
Now extending the domain of $\hat g - g$ from $S$ to $V$ gives by triangle inequality
$$
   \| \hat g - g \|_2 \le \|(M^t)^{-1} \| ( \|e^{t \Delta}(\hat g - g) \|_2 + \| e^{t \Delta} \chi_{S^c} \hat g  \|_2) + \| \hat g \chi_{S^c}\|_2
$$ 
and using feasibility, together with the $\ell^2$ norm being bounded by the $\ell^1$ norm,
$$
  \| \hat g - g \|_2 \le \|(M^t)^{-1} \| ( 2 \epsilon + \| \hat g \chi_{S^c} \|_1) +  \| \hat g \chi_{S^c} \|_1 \, .
$$
Lastly, inserting the bound for $\| \hat g \chi_{S^c} \|_1$ from Lemma \ref{ghatnonsupport} and simplifying gives the error bound proportional to $\epsilon$.
 \end{proof}

In the next two sections, we merely show the existence of an approximate dual certificate, thus giving noisy recovery guarantees
as described in the preceding theorem.
 \subsection{Spatially dependent time parameters for heat kernels} \label{tx}

 Again, we wish to consider the modified acquisition with a spatially varying time parameter. 
 In order to improve on the recovery error compared to our earlier treatment in Section 5, we consider a change in the optimization problem.

 \begin{definition}\label{LP'}
     Given $\displaystyle{\tilde f(x)= \sum_{y \in V} e^{t_y \Delta} (x,y) g(y)}$, we say that $\hat g$ is a solution of the linear program (LP$'$) if $\hat g$ minimizes the $\ell^1$-norm subject to 
     $\displaystyle{\hat f = \sum_{y \in V} e^{t_y \Delta} (x,y) \hat g(y)}$ satisfies $\|\tilde f - \hat{f} \|_2 \le \epsilon$.
 \end{definition}
 
 We argue that if $h$ is the dual certificate chosen for (LP) (as in Definition \ref{LP}), according to $h(x) = e^{T\Delta} a$ and $a$ having the same support as 
 $g$, then 
 $$
   \tilde h(x) = e^{t_x \Delta} a(x)
 $$ 
 defines an approximate dual certificate that provides noisy recovery guarantees for (LP$'$) (see Definition \ref{LP'}).

 \begin{lemma}\label{lem:beta}
 Let $g \in \Gamma_{J,D}$ be a function with support $S$, and
 let $T$, $J$ and $D$ be such that 
 $$
    \left( \frac{e T}{2 \zeta} \right)^\zeta + (J-1) \left( \frac{e T}{D} \right)^{D/2} < \frac{\lambda}{4},
 $$
 where $\lambda^{-1} = \norm{M^{-t}}\equiv\|e^{-T\Delta}\vert_{S}\|$, and let $a$ be chosen with support in $S$ such that $h=e^{T\Delta} a, \, h(x) = g(x) /|g(x)|$ for each $x \in S$, and
 $e^{T\|\Delta\|} - 1 \le 1/( 4 \sqrt J )$, then
 $\tilde h$ as given above is an approximate dual certificate.
%
\end{lemma}
\begin{proof}
We wish to show 
$$
   \| (\tilde h - h) \chi_S \|_2 \le \frac 1 4
 $$
 and 
 $$
    \|\tilde h \chi_{S^c}\|_\infty \le \frac 1 4 \, .
 $$

We first note that if $t_x \le T$ for each $x \in V$, then
if $x \in S^c$,
$$
   | e^{t_x \Delta} a(x) | \le \| e^{t_x \Delta} a \chi_{S^c}\|_\infty \le \frac 1 4  \, .
$$ 
What remains is to show the $\ell^2$-norm bound for $h - \tilde h$ restricted to $S$.

To this end, we consider for $x \in S$ 
\begin{align}
  \vert \tilde h(x) - h(x) \vert &= \vert e^{t_x \Delta} a(x) - e^{T\Delta} a(x) \vert \\
   &= \vert \langle (e^{(t_x - T)\Delta} - I) e^{T\Delta} a, \delta_x \rangle \vert \\
    &\le \norm{ e^{(t_x-T) \Delta} - I } \vert \langle e^{T \Delta} a, \delta_x \rangle \vert
   =   \norm{e^{(t_x-T) \Delta} - I }.
\end{align}

When computing the norm of $h - \tilde h$ in $\ell^2(S)$, inserting this gives
\begin{align}
   \| (h - \tilde h)\chi_S \| & \le ( \sum_{x \in S} (\tilde h(x) - h(x))^2 )^{1/2} \\
  & \le (\sum_{x \in S} \| e^{(t_x - T)\Delta}- I \|^2 )^{1/2} \\
 & \le \| (e^{-T\Delta}- I)\vert_S \| \sqrt J\\
 &\le (\lambda^{-1}-1)\sqrt{J}.
\end{align}
Hence, if $T$ is chosen as in our assumption so that $\|h \chi_{S^c} \|_\infty \le 1 /4$ and
$$
   \| e^{T \Delta} - I \| \sqrt J \le 1 /4,
$$ 
then $\tilde h$ defines an approximate dual certificate for recovering $g$ from $\tilde f$.
\end{proof}
\begin{theorem}
    Given an edge-weighted graph, $G=(V,E,b)$, and function $g \in \Gamma_{J,D}$. Let $\hat g$ be the solution to (LP$'$) (see Definition \ref{LP'}), $T$ be the maximum time 
    allowed for a heat kernel associated with the Laplacian $\Delta$ and $J=|S|.$  Assume $T, D, J$, satisfy the following conditions 
 $$
    \left( \frac{e T}{2 \zeta} \right)^\zeta + (J-1) \left( \frac{e T}{D} \right)^{D/2} < \frac{\lambda}{4},
 $$
and
 $$e^{T\|\Delta\|} - 1 \le 1/( 4 \sqrt J ),$$ 
 with $\norm{ e^{-T\Delta}|_S}=\lambda^{-1}, \, \lambda > 1/3$ and $\zeta$ the smallest distance between any vertices in $V$,
then $$\norm{\hat g - g}_2 \le C\epsilon, $$
 with $C=8 \displaystyle{ \frac{\sqrt{J}(1+\lambda^{-1})+1}{3\lambda-1}}\, , $ as shown in Theorem \ref{prop:approxDC}.  
\end{theorem}

\begin{proof}
    With assumptions from Lemma \ref{lem:beta} met, we have shown an approximate dual exists. With the conditions of Theorem \ref{prop:approxDC} met, we have stable recovery, with an error proportional to $\norm{\omega}_2.$ 
\end{proof}

\end{document}